\newcommand\defi[1]{{\sl #1}}
\newcommand\C{\mathbb{C}}
\newcommand\F{\mathbb{F}}
\newcommand\N{\mathbb{N}}
\newcommand\Q{\mathbb{Q}}
\newcommand\Z{\mathbb{Z}}
\newcommand\CC{\mathcal{C}}
\newcommand\GL{\mathrm{GL}}
\newcommand\Hom{\mathrm{Hom}}
\newcommand\gp[1]{\langle\,#1\,\rangle}
\newcommand\seq\subseteq
\newcommand\sub\subset
\newcommand\ssm\smallsetminus
\newcommand\onto\twoheadrightarrow
\newtheorem{theorem}{Theorem}
\newtheorem{cor}[theorem]{Corollary}
\newtheorem{prop}[theorem]{Proposition}
\begin{document}

\title{Coproducts of Finite Groups}
\author{Chris Hall}

\begin{abstract}
We show that for any pair of non-trivial finite groups, their coproduct in the category of finite groups is not representable.
\end{abstract}

\thanks{We gratefully acknowledge Bob Guralnick for helpful discussions on this topic.}

%\date{\today}
\maketitle

%%%%%%%%%%%%%%%%%%%%

Given a category $\CC$ and a pair of objects $X_1,X_2\in\CC$, the coproduct $X_1\coprod X_2$ is \defi{representable in $\CC$} iff
$$
	\exists X\in\CC,\ 
	\exists(\iota_{X_1},\iota_{X_2})\in\Hom_\CC(X_1,X)\times\Hom_\CC(X_2,X)
$$
such that
$$
	\forall Y\in\CC,\ 
	\forall (f_1,f_2)\in\Hom_\CC(X_1,Y)\times\Hom_\CC(X_2,Y),\ 
	\exists! f\in\Hom_\CC(X,Y)
$$
making the diagram
$$
	\xymatrix{
	& X_1\ar[dl]_{\iota_{X_1}}\ar[dr]^{f_1} & \\
	X\ar@{-->}[rr]^{\exists!f} & & Y \\
	& X_2\ar[ul]^{\iota_{X_2}}\ar[ur]_{f_2} &
	}
$$
commute.

Let $G,H$ be finite groups.  If we regard them as members of the category of all groups, then $G\coprod H$ is representable: it is the free product $G*H$, and $G\to G*H$ and $H\to G*H$ are the canonical inclusions.  The purpose of this note is to prove the following theorem:

\begin{theorem}\label{thm}
If $G,H$ are non-trivial groups, then the coproduct $G\coprod H$ in the category of finite groups is not representable.
\end{theorem}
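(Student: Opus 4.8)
The plan is to argue by contradiction. Suppose $(K,\iota_G,\iota_H)$ represents $G\coprod H$ in the category of finite groups, and put $N:=|K|$. Fix non-identity elements $g\in G$ and $h\in H$, and set $z:=\iota_G(g)\,\iota_H(h)\in K$; since $|K|=N$, the order of $z$ is at most $N$. I shall obtain a contradiction by producing a \emph{finite} group $Y$ together with homomorphisms $f_1\in\Hom(G,Y)$ and $f_2\in\Hom(H,Y)$ for which $f_1(g)\,f_2(h)$ has order $>N$. Indeed, granting this, the universal property supplies the unique $f\in\Hom(K,Y)$ with $f\circ\iota_G=f_1$ and $f\circ\iota_H=f_2$, so that $f(z)=f_1(g)\,f_2(h)$; since the order of $f(z)$ divides that of $z$, we get $\mathrm{ord}(z)\ge\mathrm{ord}(f(z))>N$, contradicting $\mathrm{ord}(z)\le N$. (Incidentally, taking $Y=G$ with $f_1=\mathrm{id}_G$ and $f_2$ the trivial homomorphism exhibits a left inverse of $\iota_G$, so $\iota_G$ is injective, and likewise $\iota_H$; but the argument does not actually use this.)

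Everything therefore reduces to constructing such a $Y$, and this is the only step with real content. I would work inside the free product $\Gamma:=G*H$, which by the discussion preceding the theorem is the coproduct of $G$ and $H$ in the category of all groups and contains $G$ and $H$ as subgroups via the canonical inclusions. Let $x_0:=gh\in\Gamma$. By the normal form theorem for free products, for each $n\ge1$ the word $(gh)^n=g\,h\,g\,h\cdots g\,h$ alternates between $G\ssm\{1\}$ and $H\ssm\{1\}$ and so is reduced and non-trivial; hence $x_0$ has \emph{infinite} order in $\Gamma$.

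The second input is that $\Gamma=G*H$ is residually finite. This is classical: a free product of finite (more generally, residually finite) groups is residually finite — for instance because $\Gamma$ contains a free subgroup of finite index, e.g.\ the kernel of $\Gamma\onto G\times H$, which acts freely on the Bass--Serre tree and hence is free. Applying residual finiteness to the finitely many non-trivial elements $x_0,x_0^2,\dots,x_0^N$ yields a finite-index normal subgroup $M\trianglelefteq\Gamma$ with $x_0^j\notin M$ for $1\le j\le N$. Then $Y:=\Gamma/M$ is finite; writing $\pi\colon\Gamma\to Y$ for the quotient map and setting $f_1:=\pi|_G$ and $f_2:=\pi|_H$, we have $f_1(g)f_2(h)=\pi(x_0)$ and $\pi(x_0)^j=\pi(x_0^j)\neq1$ for $1\le j\le N$, so $\pi(x_0)$ has order $>N$, as wanted. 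The step I expect to be the crux is precisely this construction of $Y$ — the combination of the infinitude of $\mathrm{ord}(x_0)$ with residual finiteness of the free product; once $Y$ is in hand, the remainder is a formal manipulation of the universal property.
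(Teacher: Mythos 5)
Your proof is correct, and its skeleton matches the paper's: both reduce the theorem to producing, for fixed $g\in G\ssm\{1\}$ and $h\in H\ssm\{1\}$, finite quotients of $G*H$ in which the image of $gh$ has arbitrarily large order, and both get these from the two facts that $gh$ has infinite order in $G*H$ (normal form) and that $G*H$ is residually finite; your packaging as a contradiction with $N=|K|$, and your intersection of finitely many normal subgroups avoiding $x_0,\dots,x_0^N$ versus the paper's single element $(gh)^{m!}$ with $m=\mathrm{ord}(\iota_G(g)\iota_H(h))$, are only cosmetic differences. Where you genuinely diverge is at the crux, the residual finiteness of $G*H$: you invoke it as a classical fact (a free product of residually finite groups is residually finite; or your sketch that the kernel of $G*H\onto G\times H$ is a finite-index free subgroup by Bass--Serre theory, combined with residual finiteness of free groups and the passage from a finite-index subgroup to the ambient group). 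The paper deliberately avoids appealing to such general results and instead proves residual finiteness directly: it uses Marciniak's theorem to produce a faithful representation $G*H\to\GL_n(\Q(t))$ from the regular representation of $G\times H$, and then separates a given nontrivial element from the identity by specializing $t$ at a suitable rational point and reducing modulo a suitable prime; this specialization argument is the real content of the note. Your route is shorter and more standard, at the cost of resting on nontrivial quoted theorems (which, to be airtight, you should cite precisely, including residual finiteness of free groups and its inheritance by finite-index overgroups); the paper's route is longer but essentially self-contained modulo Marciniak's linearity result. Both are valid proofs of the theorem.
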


The key will be the following proposition:

\begin{prop}\label{prop:key}
Let $G,H$ be non-trivial groups, and let $g\in G\ssm\{1\}$ and $h\in H\ssm\{1\}$.  For every $m\geq 1$, then there exist a finite group $T_m$ and a homomorphism $q_m\colon G*H\to T_m$ such that $|\gp{q_m(gh)}|>m$.
\end{prop}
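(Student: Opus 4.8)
\medskip

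The plan is to reduce the proposition to two standard properties of free products: that $gh$ has infinite order in $G*H$, and that $G*H$ is residually finite. Granting these, for each $m$ one builds $T_m$ as a finite direct product of finite quotients of $G*H$, each chosen to detect one power of $gh$.

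First I would check that $gh$ has infinite order in $G*H$. Because $g$ is a non-identity element of $G$ and $h$ a non-identity element of $H$, for every $k\geq 1$ the element $(gh)^k=gh\,gh\cdots gh$ is written as a word with $2k$ syllables alternating between $G\ssm\{1\}$ and $H\ssm\{1\}$, hence is reduced of positive syllable length; by the normal form theorem for free products it is $\neq 1$. Thus $gh,(gh)^2,\dots,(gh)^m$ are $m$ pairwise distinct non-identity elements of $G*H$.

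Next I would establish that $G*H$ is residually finite. One self-contained route: the canonical surjection $\pi\colon G*H\onto G\times H$ has kernel $N$ meeting every conjugate of $G$ and of $H$ trivially — if $wgw^{-1}\in N$ with $w\in G*H$, $g\in G$, then $\pi(g)=1$ and hence $g=1$ since $\pi|_G$ is injective, and similarly for $H$ — so by the Kurosh subgroup theorem $N$ is free. Since $[G*H:N]=|G|\,|H|<\infty$, the group $G*H$ is finitely generated and virtually free, and finitely generated virtually free groups are residually finite (reduce to $N$ normal via its core, use residual finiteness of free groups, and intersect finitely many conjugates of a witnessing finite-index subgroup). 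Alternatively one may simply cite Gruenberg's theorem that a free product of residually finite groups is residually finite.

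Finally, the assembly. For each $k\in\{1,\dots,m\}$ use residual finiteness to choose a finite group $F_k$ and a homomorphism $\phi_k\colon G*H\to F_k$ with $\phi_k\big((gh)^k\big)\neq 1$, and set $T_m=F_1\times\cdots\times F_m$ and $q_m=(\phi_1,\dots,\phi_m)$. Then $q_m\big((gh)^k\big)\neq 1$ for every $k\in\{1,\dots,m\}$, which forces $d\nmid k$ for all such $k$, where $d$ denotes the order of $q_m(gh)$; since $d\mid d$ this is impossible unless $d>m$, so $|\gp{q_m(gh)}|=d>m$, as required. I expect the only genuine obstacle to be the residual finiteness input: once one cites it (or runs the virtually-free argument sketched above), the rest is bookkeeping.
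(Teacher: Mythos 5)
Your proof is correct, and it reduces the proposition to the same two inputs as the paper --- that $gh$ has infinite order in $G*H$ and that $G*H$ is residually finite --- but you reach residual finiteness by a genuinely different route. The paper invokes Marciniak's theorem to get a faithful representation $G*H\to\GL_n(\Q(t))$ and then argues by hand: it specializes $t$ at a rational point avoiding the zeros of suitable denominator/numerator polynomials and reduces modulo a prime, so the finite quotients it produces are explicit subgroups of $\GL_n(\Z/p)$; this amounts to a self-contained instance of the Malcev argument for finitely generated linear groups. You instead note that the kernel $N$ of $G*H\onto G\times H$ meets every conjugate of $G$ and of $H$ trivially, hence is free by the Kurosh subgroup theorem, so $G*H$ is finitely generated and virtually free, and residual finiteness follows from residual finiteness of free groups plus the standard core/intersection argument (or by citing Gruenberg's theorem on free products of residually finite groups). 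Your route is purely group-theoretic and avoids representation theory entirely, at the cost of using Kurosh or Gruenberg as a black box, whereas the paper's route is more constructive and exhibits concrete congruence quotients. The final assembly also differs slightly but immaterially: the paper kills the single element $(gh)^{m!}$ in one finite quotient, so that the order of $q_m(gh)$ cannot divide $m!$ and hence exceeds $m$, while you take a direct product of $m$ finite quotients detecting $(gh)^k$ for $1\le k\le m$; both correctly yield $|\gp{q_m(gh)}|>m$.
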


Before proving the proposition we show how it implies Theorem~\ref{thm}.

\begin{proof}[Proof of Theorem~\ref{thm}]
Let $F$ be a finite group and
$$
	\iota_G\colon G\to F
	\text{ and }
	\iota_H\colon H\to F
$$
be homomorphisms.  Let
$$
	g\in G\ssm\{1\}
	\text{ and }
	h\in H\ssm\{1\},
$$
let $m$ be the order of $\iota_G(g)\iota_H(h)$, and let
$$
	q_m\colon G*H\to T_m
$$
be a homomorphism such that $|\gp{q_m(gh)}|>m$ as in Proposition~\ref{prop:key}.  Let
$$
	f_G\colon G\to T_m
	\text{ and }
	f_H\colon H\to T_m
$$
be the respective compositions of
$$
	G\to G*H
	\text{ and }H\to G*H
$$
with $q_m$ so that
$$
	\xymatrix{
		& G\ar[dl]_{\iota_G}\ar[d]_{f_G}\ar[dr] \\
		F & T_m & G*H\ar[l]_{q_m} \\
		& H\ar[ul]^{\iota_H}\ar[u]^{f_H}\ar[ur] &
	}
$$
commutes.  Observe that
$$
	\{\,f\in\Hom(F,T_m) : f_G=\iota_Gf\text{ and }f_H=\iota_Hf\,\}
$$
is empty since $f_G(g)f_H(h)$ has order exceeding $m$ while
$$
	(f\iota_G(g))(f\iota_H(h))=f(\iota_G(g)\iota_H(h))
$$
has order $m$.  Therefore one cannot find a morphism $F\to T_m$ to complete the above diagram, and hence $G\coprod H$ is not representable in the category of finite groups.
\end{proof}

The proof of the proposition will occupy the remainder of this note.

\begin{proof}[Proof of Proposition~\ref{prop:key}]\ 

To start we recall a special case of a result of Marciniak:

\begin{theorem}\label{thm:marciniak}
If there exists a faithful representation $G\times H\to\GL_n(\Q)$, then there exists a faithful representation $G*H\to\GL_n(K)$ where $K=\Q(t)$.
\end{theorem}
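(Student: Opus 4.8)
The plan is to use the hypothesis to produce two commuting finite subgroups of $\GL_n(\Q)$ and then to pry them apart with the parameter $t$ so that they generate their free product inside $\GL_n(\Q(t))$. Restricting a faithful representation $G\times H\hookrightarrow\GL_n(\Q)$ to $G\times\{1\}$ and to $\{1\}\times H$ yields faithful representations $\rho_G\colon G\to\GL_n(\Q)$ and $\rho_H\colon H\to\GL_n(\Q)$ whose images commute. First I would arrange that neither image contains a nontrivial scalar matrix: this is automatic if one starts from the regular representation of $G\times H$, in which every nonidentity element acts by a fixed-point-free permutation matrix, and in general it can be forced by adjoining a trivial summand, which is harmless for the application to Proposition~\ref{prop:key}. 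This reduction is essential and not cosmetic: a nontrivial scalar in $\rho_G(G)$ would commute with every matrix and hence could never lie in a free product, so excluding scalars is precisely what makes the construction possible.

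Next I would set $D(t)=\mathrm{diag}(1,t,\dots,t^{n-1})\in\GL_n(\Q(t))$ and define $\phi\colon G*H\to\GL_n(\Q(t))$ by $\phi|_G=\rho_G$ and $\phi|_H=D(t)\,\rho_H\,D(t)^{-1}$, which is a homomorphism by the universal property of the free product. To prove $\phi$ faithful it suffices to show that every nontrivial reduced word $w=c_1c_2\cdots c_k$, with consecutive letters alternately in $\rho_G(G)\ssm\{1\}$ and $D(t)\rho_H(H)D(t)^{-1}\ssm\{1\}$, has $\phi(w)\neq I$. Since the entries of $\phi(w)-I$ are rational functions of $t$, the matrix $\phi(w)$ equals $I$ over $\Q(t)$ if and only if the specialization $\phi_{t_0}(w)$ equals $I$ for every admissible $t_0\in\overline{\Q}$. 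Thus faithfulness reduces to a general-position statement over $\overline{\Q}$: for all but finitely many $t_0$ the two finite subgroups $\rho_G(G)$ and $D(t_0)\rho_H(H)D(t_0)^{-1}$ of $\GL_n(\overline{\Q})$ generate their internal free product; exhibiting a single such $t_0$ already forces $\phi(w)\neq I$ over $\Q(t)$ for all $w$ simultaneously.

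I would establish this general-position statement by a ping-pong argument on $\mathbb{P}^{n-1}$ over the valued field $\Q((t))$ with the $t$-adic valuation. The group $\rho_G(G)$ is defined over the residue field and so respects the reduction to $\mathbb{P}^{n-1}(\Q)$, whereas conjugation by $D(t)$ scales the $i$-th coordinate by $t^{\,i-1}$ and drives $\rho_H(H)$ onto a dynamically separated region near the coordinate flag; one then produces disjoint nonempty sets $X_G,X_H\seq\mathbb{P}^{n-1}(\Q((t)))$ satisfying $(\rho_G(G)\ssm\{1\})\,X_H\seq X_G$ and $(D\rho_H(H)D^{-1}\ssm\{1\})\,X_G\seq X_H$, whence the ping-pong lemma delivers the free product. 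The hard part is arranging these ping-pong sets uniformly: because the generators have finite order they are not loxodromic, so naive attracting fixed points are unavailable, and the two covering inclusions must be verified for every nonidentity element of each finite group at once. I expect this uniform verification—tracking where each coordinate subspace is sent under the scaled action, confirming disjointness after the $D(t)$-twist, and invoking the scalar-free hypothesis to rule out degenerate coincidences—to be the main obstacle; once it is in place, the faithfulness of $\phi$, and hence Theorem~\ref{thm:marciniak}, follows at once.
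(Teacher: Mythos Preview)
The paper does not prove this theorem; its entire proof is the line ``See \cite{Marciniak}.'' So there is no argument in the paper to compare yours against---you are attempting to supply a proof where the author simply defers to the literature.

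Your construction, however, has a concrete gap well before the ping-pong stage. The conjugator $D(t)=\mathrm{diag}(1,t,\dots,t^{n-1})$ centralizes every diagonal matrix, so if $\rho_H(H)$ happens to consist of diagonal matrices the twist does nothing. For instance, take $G=H=\Z/2$ with the faithful embedding $G\times H\hookrightarrow\GL_2(\Q)$ sending the two generators to $\mathrm{diag}(-1,1)$ and $\mathrm{diag}(1,-1)$: neither image contains a nontrivial scalar, yet $D(t)\rho_H(h)D(t)^{-1}=\rho_H(h)$, so your $\phi$ factors through $G\times H$ and is not faithful on $G*H$. Passing to the regular representation does not automatically rescue this, because the effect of a diagonal $D(t)$ on permutation matrices depends on the basis ordering, which you have not specified; for several natural orderings of $\Q[\Z/2\times\Z/2]$ a short block computation again yields commuting images. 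The broad strategy---deform one factor by a one-parameter conjugation and argue that the deformed images generate their free product---is indeed the shape of the known proofs, but a diagonal $D(t)$ is far too special, and choosing a genuinely generic conjugator together with verifying that it works is precisely the substance of the cited reference. A secondary issue: your remedy for the scalar obstruction (adjoining a trivial summand) increases $n$, so even if everything else succeeded you would not recover the theorem with the same $n$ as stated.
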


\begin{proof}
See \cite{Marciniak}.
\end{proof}

Recall that a group is \defi{residually finite} iff every non-identity element is contained in the complement of a finite-index normal subgroup.

\begin{cor}\label{cor:residually-finite}
$G*H$ is residually finite.
\end{cor}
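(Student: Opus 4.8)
The plan is to obtain the corollary by feeding Theorem~\ref{thm:marciniak} into Mal'cev's classical theorem that a finitely generated linear group over a field is residually finite.

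First I would verify the hypothesis of Theorem~\ref{thm:marciniak}. As $G$ and $H$ are finite, $G\times H$ is finite, and its left regular representation over $\Q$ is a faithful homomorphism $G\times H\to\GL_n(\Q)$ with $n=|G|\cdot|H|$. Theorem~\ref{thm:marciniak} then yields a faithful representation $\rho\colon G*H\to\GL_n(K)$ with $K=\Q(t)$, so that $G*H$ is isomorphic to a subgroup of $\GL_n(K)$.

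Next I would note that $G*H$ is finitely generated: a generating set of $G$ together with a generating set of $H$ generates $G*H$, and both are finite since $G,H$ are. Thus $\rho(G*H)$ is a finitely generated subgroup of $\GL_n(K)$ for the field $K$, and Mal'cev's theorem gives that it is residually finite; since residual finiteness passes across the isomorphism $G*H\cong\rho(G*H)$, this finishes the proof.

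If one wants to avoid citing Mal'cev, the one step carrying genuine content is the following: given $g\in G*H$ with $g\neq 1$, adjoin to $\Z$ the finitely many entries of $\rho$ evaluated on a generating set of $G*H$ and on the inverses of those generators; one lands in $\GL_n(R)$ for a finitely generated integral domain $R\seq K$ (note $\det\rho(g)\in R^\times$ automatically, since $\rho(g)^{-1}$ also has entries in $R$). A finitely generated integral domain has the property that the intersection of its maximal ideals with finite residue field is $\{0\}$, so one may choose such a maximal ideal $\mathfrak m$ not containing some nonzero entry of $\rho(g)-I$; reduction modulo $\mathfrak m$ then gives a homomorphism from $G*H$ into the finite group $\GL_n(R/\mathfrak m)$ whose kernel omits $g$. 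The only obstacle here is the bookkeeping needed to ensure that all of $\rho(G*H)$ actually lies in $\GL_n(R)$, which the choice of $R$ arranges; the substantive input is the Nullstellensatz-type fact about finitely generated domains, which I would simply quote.
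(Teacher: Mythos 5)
Your proposal is correct, but it takes the route the paper explicitly declines: after invoking Theorem~\ref{thm:marciniak} (exactly as the paper does, though with the regular representation of $G\times H$, $n=|G|\cdot|H|$, where the paper uses $\Q[G]\oplus\Q[H]$, $n=|G|+|H|$ --- either is fine), you quote Mal'cev's theorem that a finitely generated linear group over a field is residually finite. The paper acknowledges that this general fact would finish the argument and then deliberately avoids it, giving a hands-on proof instead: it clears denominators by localizing $\Q[t]$ at the polynomial $D\cdot N$ built from the entries of $\rho(G\cup H)$ and of $\rho(w)-I_n$, specializes $t$ to a rational number $s$ outside the zero set of $D\cdot N$ to land in $\GL_n(\Q)$ with controlled denominators $D_s$, and then reduces modulo a prime $p\nmid D_s$ to get a finite quotient separating $w$. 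Your fallback sketch (adjoin to $\Z$ the entries of the generators and their inverses, note $\rho(G*H)\subseteq\GL_n(R)$ for the resulting finitely generated domain $R$, and use the Jacobson/Nullstellensatz fact that maximal ideals of $R$ have finite residue fields and intersect in $\{0\}$ to find $\mathfrak m$ missing a nonzero entry of $\rho(g)-I$) is essentially the standard proof of Mal'cev and is sound; the bookkeeping you flag does go through since every element of $G*H$ is a word in $G\cup H$. The trade-off: your version is shorter but rests on a quoted commutative-algebra input, while the paper's specialization argument stays elementary (polynomials, a rational specialization, and reduction mod a prime) and is self-contained modulo Marciniak's theorem.
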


\begin{proof}
Let $\Q[G]$ and $\Q[H]$ be the respective group algebras of $G$ and $H$, and let $n=|G|+|H|$.  If $V=\Q[G]\oplus\Q[H]$, then $V\simeq\Q^n$ and there exists a canonical faithful representation $G\times H\to\GL_n(\Q)$.  Let
$$
	\rho\colon G*H\to\GL_n(K)
$$
be the corresponding representation given by Theorem~\ref{thm:marciniak}.  It is faithful and the image is finitely generated, so the corollary is a consequence of the fact that any finitely generated subgroup of $\GL_n(\C)$ is residually finite.  Rather than appeal to this general fact though, we prove directly that $G*H$ is residually finite.

Let $w\in G*H$ be a non-identity element.  We must show that there is a finite-index normal subgroup of $G*H$ whose complement contains $w$.

The set $\rho(G\cup H)$ is finite and contained in $\GL(K)$, so the least common multiple of the denominators of all the entries of the matrices in $\rho(G\cup H)$ is a non-zero polynomial $D\in\Q[t]$.  Moreover, the greatest common divisor of the numerators of all the entries of $\rho(w)-I_n$ is a non-zero polynomial $N\in\Q[t]$.

Let $Z\sub\Q$ be the finite set of zeros of $D\cdot N$ in $\Q$ and $U:=Q\ssm Z$.  Let $S$ be the polynomial ring $\Q[t][1/DN]$ and
\begin{equation}\label{eqn:S-to-K}
	\GL_n(S)\to\GL_n(K)
\end{equation}
be the homomorphism induced by the inclusion $S\to K$.

Observe that the elements of $\rho(G\cup H)$ all have finite order and generate $\rho(G*H)$.  Therefore the elements of $\det(\rho(G\cup H))$ are roots of unity and generate $\det(\rho(G*H))$, so the latter is contained in $S^\times$.  In particular, $\rho$ factors through \eqref{eqn:S-to-K}.

For each $s\in U$, let $\F_s$ be the quotient field $S/(t-s)S$ and $\rho_s$ be the composition
$$
	G*H\to\GL_n(S)\to\GL_n(\F_s)
$$
where the last homomorphism is induced by the quotient $S\to\F_s$.  Observe that $\F_s$ is (canonically) isomorphic to $\Q$, so we can regard $\rho_s$ as a homomorphism
$$
	\rho_s\colon G*H\to\GL_n(\Q).
$$
Observe also that the least common multiple of the denominators of the entries of all matrices in $\rho_s(G\cup H)$ is a positive integer $D_s\in\N$, and that $\det(\rho_s(G*H))\seq\Z[1/D_s]^\times$.  Therefore the image of $\rho_s$ is contained in the image of the natural homomorphism
$$
	\GL_n(\Z[1/D_s])\to\GL_n(\Q),
$$
and hence there is a homomorphism
$$
	\rho_{s,p}\colon G*H\to\GL_n(\Z/p)
$$
for each $p\nmid D_s$.  By construction, $\rho_{s,p}(w)\neq I_n$ and the kernel of $\rho_{s,p}$ has finite index in $G*H$.  In particular, the complement of the latter is a finite-index subgroup containing $w$, so $G*H$ is residually finite as claimed.
\end{proof}

Observe that $gh$ has infinite order in $G*H$.  Therefore, for each $m\geq 1$, the element $w_m:=(gh)^{m!}$ is not the identity, hence Corollary~\ref{cor:residually-finite} implies that there exists a finite-index normal subgroup $K_m\seq G*H$ whose complement contains $w_m$.

To complete the proof of the proposition, we let $T_m:=(G*H)/K_m$ and
$$
	q_m\colon G*H\to T_m
$$
be the canonical quotient.  By definition,
$$
	q_m(w^{m!})=q_m(w_m)\neq 1,
$$
hence $q_m(w)$ has order exceeding $m$ as claimed.
\end{proof}

%%%%%%%%%%%%%%%%%%%%

\bibliographystyle{amsplain}
\bibliography{coproducts}

%%%%%%%%%%%%%%%%%%%%

\end{document}